
\documentclass{article}

\usepackage{bbm}
\usepackage{eufrak} 
\usepackage{amssymb}
\usepackage{amsmath}
\usepackage{amscd}  
\usepackage{amsthm}




%

\newcommand\Rs{{\mathbb R}}

\newcommand\Ss{{\mathbb S}}

\newcommand\dd{\partial}
\newcommand\MM{{\mathcal  M}} 
\newtheorem{lemma}{Lemma}
\newtheorem{theorem}{Theorem}
\newtheorem{corollary}{Corollary}

\begin{document}

\title{The Euler characteristic of a triangulated  manifold in terms of even-dimensional faces}
\author{Alexey V. Gavrilov}

\maketitle


\begin{abstract}
For a triangulated  manifold $\MM$ of even dimension $d$ we show that\\  
$\chi(\MM)=\sum_{m=0}^{d/2} c_m f_{2m}$ where $\chi(\MM)$
is the Euler characteristic and $f_n$ is  the number of $n$-dimensional faces in the  triangulation. The coefficients $c_m$ in this formula do not depend on $d$.
\end{abstract}

\section{Introduction} 

The Euler characteristic  of a finite simplicial complex $\MM$ is given by the classical formula
$$\chi(\MM)=\sum_{n=0}^\infty (-1)^n f_n.$$
On the right hand side are  $f$ - numbers of the complex,  that is  $f_n$ denotes the number  of faces of dimension $n$.    We are interested in the case when $\MM$ is  a triangulated closed  manifold 
where by a triangulation we mean a homeomorphism to a finite simplicial complex
(which is  not necessarily  a PL manifold). The Euler characteristic  of a closed manifold of odd dimension is zero so we only consider manifolds of even dimension $d=\dim\MM$.

In this situation the above is not the only possible formula for the Euler characteristic  because there exist linear relations between $f$ - numbers known as Dehn-Sommerville relations. (For example,  $f_{d-1}=\frac{d+1}{2}f_d$  as   any  $(d-1)$ - face is shared between  two $d$ - faces.)   In this note we prove a formula whose right hand side only includes even-dimensional $f$ - numbers. Of course,  in each particular dimension the problem is  trivial as long as the  relations are known: for example, if $\MM$ is a surface then $f_1=\frac{3}{2}f_2$ hence  $\chi(\MM)=f_0-\frac{1}{2}f_2$. The same way,   for a fourfold we have $f_3=\frac{5}{2}f_4$ and (less obviously) $2f_1-3f_2+4f_3-5f_4=0$, so  $\chi(\MM)=f_0-\frac{1}{2}f_2+f_1$. The Dehn-Sommerville relations for all  dimensions   were described by Victor  Klee \cite{K} which reduces  our task to a certain combinatorial problem.  

Let $\beta_n\,\,(n\ge -1)$
be  a sequence  defined by 
$$\beta_{n-2}=\frac{4(2^n-1)B_n}{n}$$
where $B_n$ are the Bernoulli numbers; note that $\beta_n=0$ when $n$ is odd with the exception of $\beta_{-1}=-2$. 
A semi-Eulerian complex is a pure simplicial complex such that  the link of any face of dimension $n$ has the same Euler characteristic $1-(-1)^{d+n}$ as the sphere  $\Ss^{d-n-1}$ \cite{K, NS, CM}  
(Klee called it an  Eulerian manifold).
$$$$
\begin{theorem}
If $\MM$ is a semi-Eulerian complex  of even dimension $d$ then 
\begin{equation}
\chi(\MM)=\sum_{n=0}^{d} \beta_n f_n. \label{t}
\end{equation}
\end{theorem}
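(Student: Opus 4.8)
The plan is to read the formula off the Dehn--Sommerville relations by forming one cleverly weighted sum over all faces. First I would record the elementary double-counting identity, valid for \emph{any} finite simplicial complex $\MM$ and any $i\ge -1$: since $\tau\mapsto\sigma\cup\tau$ is a bijection between the $k$-faces of $\mathrm{lk}\,\sigma$ and the $(i+k+1)$-faces of $\MM$ containing a fixed $i$-face $\sigma$, summing $\chi(\mathrm{lk}\,\sigma)=\sum_{k\ge0}(-1)^kf_k(\mathrm{lk}\,\sigma)$ over all $i$-faces and counting each $(i+k+1)$-face once for every one of its $\binom{i+k+2}{i+1}$ subsets of size $i+1$ gives $\sum_{\dim\sigma=i}\chi(\mathrm{lk}\,\sigma)=\sum_{j\ge i+1}(-1)^{j-i-1}\binom{j+1}{i+1}f_j$ (for $i=-1$ this is just $\chi(\MM)=\sum_j(-1)^jf_j$). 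Because $\MM$ is semi-Eulerian of even dimension $d$, for $i\ge0$ every $i$-face has $\chi(\mathrm{lk}\,\sigma)=1-(-1)^{d+i}=1-(-1)^i$, so the left side also equals $(1-(-1)^i)f_i$; the empty face carries no such constraint.

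Next I would introduce $W:=\sum_{\sigma\in\MM}\beta_{\dim\sigma+1}\,\chi(\mathrm{lk}\,\sigma)$, the sum over \emph{all} faces including $\emptyset$, and compute it two ways. Using the semi-Eulerian values, the term $\sigma=\emptyset$ contributes $\beta_0\chi(\MM)=\chi(\MM)$ (as $\beta_0=1$), and since $\beta_{i+1}(1-(-1)^i)=2\beta_{i+1}$ for all $i\ge0$ (the factor kills the odd case, where $\beta_{i+1}=0$ anyway), one obtains $W=\chi(\MM)+2\sum_{i\ \mathrm{odd}}\beta_{i+1}f_i$. Substituting instead the double-counting identity and reindexing $l=i+1$ yields $W=\sum_j f_j\sum_{l=0}^{j}(-1)^{j-l}\beta_l\binom{j+1}{l}$. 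The whole argument then hinges on showing that this inner sum equals $\beta_j$ for even $j$ and $2\beta_{j+1}$ for odd $j$; granting that, the two formulas for $W$ are equated, the odd-$j$ contributions cancel, and we are left with $\chi(\MM)=\sum_{j\ \mathrm{even}}\beta_jf_j=\sum_{j=0}^d\beta_jf_j$, which is \eqref{t}.

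The inner-sum evaluation reduces, via $(-1)^{j-l}=(-1)^j(-1)^l$ together with the $l=j+1$ boundary term, to the Bernoulli-number identity $\sum_{l=0}^{N}(-1)^l\beta_l\binom{N}{l}=\beta_{N-1}-\beta_N$ for $N\ge1$, and I expect this to be the main obstacle. I would prove it by exponential generating functions: from $\sum_{n\ge0}B_nt^n/n!=t/(e^t-1)$ one gets $\sum_{n\ge0}(2^n-1)B_nt^n/n!=\tfrac{2t}{e^{2t}-1}-\tfrac{t}{e^t-1}=-t/(e^t+1)$, hence $\sum_{m\ge0}\beta_m t^{m+1}/(m+1)!=2\tanh(t/2)$ and, differentiating, $\sum_{m\ge0}\beta_m t^m/m!=\operatorname{sech}^2(t/2)$. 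The generating function of the left side of the identity is then $e^t\operatorname{sech}^2(t/2)=4e^{2t}/(e^t+1)^2$, and the identity becomes the elementary rational identity $4e^{2t}/(e^t+1)^2=2+2\tanh(t/2)-\operatorname{sech}^2(t/2)$, which is immediate in the variable $u=e^t$ since $\tanh(t/2)=(u-1)/(u+1)$ and $\operatorname{sech}^2(t/2)=4u/(u+1)^2$. Feeding this back completes the proof; one can sanity-check the output in low dimensions, recovering $\chi=f_0-\tfrac12f_2$ for surfaces and $\chi=f_0-\tfrac12f_2+f_4$ for fourfolds.
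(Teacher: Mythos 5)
Your proposal is correct, and it takes a genuinely different route from the paper. The paper quotes the Dehn--Sommerville relations in $h$-vector form from Novik--Swartz, repackages them as the functional equation $p(z)+(-1)^dp(-1-z)=0$ for $p(z)=\tfrac12\chi(\MM)+\sum_n f_nz^{n+1}$ (Lemma 1), and then constructs the linear functional $\theta(z^n)=\beta_{n-1}$ annihilating all such $p$, the key fact being that $\left(\sum_{n\ge1}(4^n-2^n)B_nz^n/n!\right)e^z=-z/\cosh z$ is odd. You instead rederive the needed relations from scratch by double-counting incidences $\sigma\subset\tau$ against link Euler characteristics, and choose the weights $\beta_{\dim\sigma+1}$ so that the two evaluations of $W$ differ by exactly $\chi(\MM)$. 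I checked your chain: the incidence identity is right; the reduction of the inner sum $T_j=\sum_{l=0}^{j}(-1)^{j-l}\binom{j+1}{l}\beta_l$ via the boundary term gives $T_j=\beta_{j+1}+(-1)^j(\beta_j-\beta_{j+1})$ once one knows $\sum_{l=0}^{N}(-1)^l\binom{N}{l}\beta_l=\beta_{N-1}-\beta_N$, which indeed yields $\beta_j$ for even $j$ and $2\beta_{j+1}$ for odd $j\ge1$ (where $\beta_j=0$); and the generating-function verification is correct, since $\sum_{m\ge0}\beta_mt^m/m!=\operatorname{sech}^2(t/2)$ follows from $\sum_n(2^n-1)B_nt^n/n!=-t/(e^t+1)$ and the final rational identity in $u=e^t$ checks out. (I also verified $T_0=1$, $T_1=-1$, $T_2=-\tfrac12$, $T_3=2$ numerically.) What your approach buys: it is self-contained, bypassing the $h$-vector machinery, and it makes visible why only even $f$-numbers survive --- the odd-$j$ terms occur identically in both evaluations of $W$ and cancel. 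What the paper's approach buys: the functional $\theta$ comes with a uniqueness statement explaining why these coefficients are forced, and the polynomial symmetry handles odd $d$ uniformly. At bottom both proofs rest on the same parity property of a hyperbolic generating function, so the kinship is closer than the surface presentation suggests. Two trivia: your parenthetical about which case the factor $1-(-1)^i$ kills is garbled (it kills even $i$, where $\beta_{i+1}$ vanishes anyway because $i+1$ is odd), though the identity $\beta_{i+1}(1-(-1)^i)=2\beta_{i+1}$ you actually use is correct; and your fourfold check $\chi=f_0-\tfrac12f_2+f_4$ is right, the introduction's ``$+f_1$'' being a typo for ``$+f_4$''.
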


It is known that  if a simplicial complex is  homeomorphic to a closed manifold then 
the link of any  face has the same homology groups as the sphere 
of the corresponding dimension. 
(Klee himself refers to  Lefschetz's ``Topology''.  Apparently,  the first modern proof of this fact  is given in \cite[Proposition 1.2]{GS}.) 
Consequently, it is always a semi-Eulerian complex. Today  it is known that some  topological manifolds  admit no  triangulations at all  but we do not discuss this interesting topic here.  

The formula itself is not particularly surprizing:  the Dehn-Sommerville relations reduce the dimension of the space of $f$ - vectors from $d+1$ to $\frac{d}{2}+1$, so it is natural to  expect 
for even $f$ - numbers to  be sufficient.   What is remarkable is that  the coefficients of this formula do not depend on the dimension $d$. (The author does not know if this fact has some  topological explanation or is merely a kind of coincidence.)  It should be  possible to generalize this result using more general Dehn-Sommerville relations  (see \cite{CM} and references therein) but in the end  the author decided to leave the formulation  as it is. On the other hand, extending the formula  to a manifold with a boundary is not difficult. 
$$$$
\begin{corollary}
If $\MM$ is an even-dimensional  PL manifold  with a boundary  then 
$$\chi(\MM)=\sum_{n=0}^{d} \beta_n (f_n(\MM)-\tfrac{1}{2}f_n(\dd\MM)).$$
\end{corollary}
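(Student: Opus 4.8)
The plan is to reduce the corollary to the theorem by a doubling construction. Given an even-dimensional PL manifold $\MM$ with boundary $\dd\MM$, form the closed PL manifold $\widehat{\MM}=\MM\cup_{\dd\MM}\MM'$, the double of $\MM$ along its boundary, where $\MM'$ is a second copy of $\MM$. Since $\widehat{\MM}$ is a closed PL manifold of the same even dimension $d$, it is in particular a semi-Eulerian complex (as noted in the excerpt: links of faces have sphere homology, hence the required Euler characteristics), so Theorem~1 applies and gives $\chi(\widehat{\MM})=\sum_{n=0}^d \beta_n f_n(\widehat{\MM})$.

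Next I would compare the two sides of this identity with the corresponding quantities for $\MM$. On the level of faces, a face of the triangulation of $\widehat{\MM}$ is either a face lying entirely in $\dd\MM$ (counted once) or a face having at least one vertex off the boundary, and the latter come in two copies — one in each half. Hence $f_n(\widehat{\MM}) = 2f_n(\MM) - f_n(\dd\MM)$ for every $n$. On the level of Euler characteristics, inclusion–exclusion for $\widehat{\MM}=\MM\cup\MM'$ with $\MM\cap\MM'=\dd\MM$ gives $\chi(\widehat{\MM})=2\chi(\MM)-\chi(\dd\MM)$; moreover $\dd\MM$ is a closed odd-dimensional manifold, so $\chi(\dd\MM)=0$ and therefore $\chi(\widehat{\MM})=2\chi(\MM)$. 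Substituting both relations into the theorem's formula yields $2\chi(\MM)=\sum_{n=0}^d \beta_n\bigl(2f_n(\MM)-f_n(\dd\MM)\bigr)$, and dividing by $2$ gives exactly the claimed formula.

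The one genuine subtlety is that the double must be triangulated \emph{compatibly}: I need a simplicial complex structure on $\widehat{\MM}$ that restricts to the given triangulations on each copy of $\MM$ and agrees on the common subcomplex $\dd\MM$. This is where the PL hypothesis is used — for a PL manifold with boundary the given triangulation can be taken to be (or refined to) one in which $\dd\MM$ is a full subcomplex, and then the two copies of $\MM$ glue together along $\dd\MM$ to form a bona fide simplicial complex without any further subdivision, so the face count $f_n(\widehat{\MM})=2f_n(\MM)-f_n(\dd\MM)$ holds on the nose. I expect this gluing/compatibility step to be the main point requiring care; everything else is the bookkeeping of the two substitutions above. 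It is also worth remarking why $\beta_{-1}=-2$ does not enter: the sums run over $n\ge 0$, and the empty face plays no role here, so the formula is stated purely in terms of the face numbers $f_n$ with $n\ge 0$.
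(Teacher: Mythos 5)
Your proof is correct and follows essentially the same route as the paper: double $\MM$ along its boundary, apply Theorem~1 to the closed double, and use $\chi(\widetilde{\MM})=2\chi(\MM)-\chi(\dd\MM)$ together with $f_n(\widetilde{\MM})=2f_n(\MM)-f_n(\dd\MM)$. Your extra remarks on the compatibility of the triangulations and on $\chi(\dd\MM)=0$ only make explicit what the paper leaves implicit.
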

Indeed, we have  
$$2\chi(\MM)=2\chi(\MM)-\chi(\dd\MM)=\chi(\widetilde{\MM})=\sum_{n=0}^{d} \beta_n f_n(\widetilde{\MM})=\sum_{n=0}^{d} \beta_n (2f_n(\MM)-f_n(\dd\MM))$$ 
where  $\widetilde{\MM}$ is the double  of $\MM$.

\section{A proof of the formula}

It is now customary to formulate Dehn-Sommerville relations in terms of  $h$ - numbers which are the coefficients in the expansion
$F(x-1)=\sum_{n=0}^{d+1} h_n x^{d+1-n}$
where  $F(x)=x^{d+1}+\sum_{n=0}^d f_n x^{d-n}$.
We have  \cite{NS} 
$$h_{d+1-n}-h_n=(-1)^n\binom{d+1}{n}(\chi(\MM)-\chi(\Ss^d)),\,\,\,(0\le n\le d+1).$$
For our purposes it is  convenient to reformulate this result  as follows.
$$$$
\begin{lemma}
If $\MM$ is a semi-Eulerian complex of  dimension $d$ then the polynomial 
$$p(z)=\frac{1}{2}\chi(\MM)+\sum_{n=0}^d f_nz^{n+1}$$
satisfies the identity 
\begin{equation}
p(z)+(-1)^dp(-1-z)=0. \label{l}
\end{equation}
\end{lemma}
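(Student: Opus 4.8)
The plan is to pass from $f$-numbers to the generating polynomial $F$ and its shift, the $h$-polynomial, where the Dehn--Sommerville relations take their cleanest form. Introduce $Q(z) = z^{d+1}F(1/z)$; multiplying out, $Q(z) = 1 + \sum_{n=0}^d f_n z^{n+1}$, so that $p(z) = Q(z) + \tfrac12\chi(\MM) - 1$ and it suffices to understand $Q(z) + (-1)^d Q(-1-z)$. Substituting $x = 1 + 1/z$ into $F(x-1) = \sum_{n=0}^{d+1} h_n x^{d+1-n}$ and clearing denominators gives the expression I want to work with:
\[
Q(z) = \sum_{n=0}^{d+1} h_n\, z^n (1+z)^{d+1-n}.
\]

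Next I would perform the substitution $z \mapsto -1-z$ directly in this formula. Since then $z \mapsto -(1+z)$ and $1+z \mapsto -z$, one checks that $z^n(1+z)^{d+1-n} \mapsto (-1)^{d+1}(1+z)^n z^{d+1-n}$; re-indexing $n \mapsto d+1-n$ collects this into $Q(-1-z) = (-1)^{d+1}\sum_n h_{d+1-n} z^n (1+z)^{d+1-n}$. Therefore
\[
Q(z) + (-1)^d Q(-1-z) = \sum_{n=0}^{d+1} (h_n - h_{d+1-n})\, z^n (1+z)^{d+1-n}.
\]
Now I invoke the displayed Dehn--Sommerville relations, which give $h_n - h_{d+1-n} = -(-1)^n\binom{d+1}{n}\bigl(\chi(\MM) - \chi(\Ss^d)\bigr)$, so the right-hand side equals
\[
-\bigl(\chi(\MM) - \chi(\Ss^d)\bigr)\sum_{n=0}^{d+1} \binom{d+1}{n}(-z)^n (1+z)^{d+1-n} = -\bigl(\chi(\MM) - \chi(\Ss^d)\bigr),
\]
since by the binomial theorem the sum equals $\bigl((1+z)-z\bigr)^{d+1} = 1$. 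This collapse is the only genuinely slick point; the rest is bookkeeping.

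It remains to assemble the pieces. Using $p(z) = Q(z) + \tfrac12\chi(\MM) - 1$ and $\chi(\Ss^d) = 1 + (-1)^d$,
\[
p(z) + (-1)^d p(-1-z) = (1+(-1)^d)\bigl(\tfrac12\chi(\MM) - 1\bigr) - \chi(\MM) + 1 + (-1)^d = \tfrac12\,(1+(-1)^d)\,\chi(\MM) - \chi(\MM),
\]
which is identically $0$ when $d$ is even. When $d$ is odd this reads $-\chi(\MM)$, but the $n=0$ instance of the same Dehn--Sommerville relations, together with $h_0 = 1$ and $h_{d+1} = F(-1) = (-1)^d(\chi(\MM)-1)$, forces $\chi(\MM) = 0$; so \eqref{l} holds for every $d$ at once. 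The main obstacle I anticipate is not conceptual but clerical: keeping the signs straight through the $z \mapsto -1-z$ substitution, and making sure the $h$-polynomial identity is written with the correct top degree $d+1$ (the leading coefficient of $F$ being the empty-face count $1$). Once the substitution is done carefully, the binomial identity $\sum_n\binom{d+1}{n}(-z)^n(1+z)^{d+1-n} = 1$ does all the work.
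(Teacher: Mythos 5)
Your proof is correct and takes essentially the same route as the paper: both start from the cited Dehn--Sommerville relations for the $h$-numbers, pass to $z^{d+1}F(1/z)$ via the substitution $x=1+1/z$, and watch the $(\chi(\MM)-\chi(\Ss^d))(x-1)^{d+1}$ term collapse to a constant. The differences are cosmetic --- you expand the binomial identity explicitly instead of working with the rational-function form, and you derive $\chi(\MM)=0$ for odd $d$ from the $n=0$ relation rather than asserting it, which makes the odd case slightly more self-contained.
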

{\bf Remark}   The equality  \eqref{l}  has already  appeared in the literature
\cite[Lemma 3]{Kn} but, for some  reasons, it was formulated 
in a way excluding the case when  $d$ is even and  $\chi(\MM)\neq 2$.
\begin{proof}
The above relations can be written in the form 
$$x^{d+1}F\left(\frac{1-x}{x}\right)-F(x-1)=(\chi(\MM)-\chi(\Ss^d))(x-1)^{d+1}.$$
By definition, $p(z)=\frac{1}{2}\chi(\MM)-1+z^{d+1}F(z^{-1})$.
When $d$ is  even  $\chi(\Ss^d)=2$, and taking  $x=1+z^{-1}$ we have 
$$p(z)+p(-1-z)=\chi(\MM)-2+(x-1)^{-d-1}F(x-1)-\left(\frac{x}{x-1}\right)^{d+1}F\left(\frac{1-x}{x}\right)=0.$$
For an odd  $d$ we have $\chi(\MM)=\chi(\Ss^d)=0$ and  the same calculation gives $p(z)-p(-1-z)=0$. 
\end{proof}
\begin{lemma}
Let $\theta: \Rs[z]\to\Rs$ be a linear map defined by
$$\theta(z^n)=\beta_{n-1}\,(n\ge 0).$$ 
If $p\in  \Rs[z]$ is a polynomial 
satisfying   the identity $p(z)+p(-1-z)=0$ then $\theta(p)=0$.
If $q$ is an even polynimial then $\theta(q)=-2q(0)$. 
Moreover, $\theta$ is the only linear functional on  $\Rs[z]$ 
with this  properties. 
\end{lemma}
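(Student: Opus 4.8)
The plan is to treat the three assertions separately, since the last one (uniqueness) will follow from the first two plus a spanning statement. The middle assertion is immediate from the definition of the sequence $\beta$: since $\beta_n=0$ for odd $n\ge 1$ while $\beta_{-1}=-2$, we have $\theta(z^{2k})=\beta_{2k-1}=-2\,[k=0]$, so any even polynomial $q=\sum_k a_k z^{2k}$ satisfies $\theta(q)=-2a_0=-2q(0)$. No work is needed here beyond unwinding notation.

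The substantial part is showing $\theta(p)=0$ when $p(z)+p(-1-z)=0$, and I would do it through the exponential generating function. Extend $\theta$ $\Rs[[t]]$-linearly to power series with polynomial coefficients and set $g(t):=\sum_{n\ge 0}\theta(z^n)\tfrac{t^n}{n!}=\sum_{n\ge 0}\beta_{n-1}\tfrac{t^n}{n!}$. Writing the defining relation as $\beta_{n-1}=\tfrac{4(2^{n+1}-1)B_{n+1}}{n+1}$ (valid for $n\ge 0$ with the convention $B_1=-\tfrac12$), reindexing by $m=n+1$, and using $\sum_{m\ge 0}B_m\tfrac{s^m}{m!}=\tfrac{s}{e^s-1}$ twice (at $s=2t$ and $s=t$), one gets $g(t)=\tfrac{4}{t}\bigl(\tfrac{2t}{e^{2t}-1}-\tfrac{t}{e^t-1}\bigr)$, which collapses via $e^{2t}-1=(e^t-1)(e^t+1)$ to $g(t)=\tfrac{-4}{e^t+1}$. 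The decisive observation is that then $\sum_{n\ge 0}\theta\bigl((z+\tfrac12)^n\bigr)\tfrac{t^n}{n!}=e^{t/2}g(t)=\tfrac{-4e^{t/2}}{e^t+1}=\tfrac{-2}{\cosh(t/2)}$ is an \emph{even} power series in $t$, so $\theta\bigl((z+\tfrac12)^{2k+1}\bigr)=0$ for all $k\ge 0$. Finally, if $p(z)+p(-1-z)=0$ then the substitution $z=w-\tfrac12$ shows $\tilde p(w):=p(w-\tfrac12)$ is an odd polynomial, hence $p$ is an $\Rs$-linear combination of the $(z+\tfrac12)^{2k+1}$, giving $\theta(p)=0$.

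For uniqueness, let $U$ denote the subspace $\{p:p(z)+p(-1-z)=0\}$ and $V$ the subspace of even polynomials; any linear functional with the stated two properties agrees with $\theta$ on $U+V$, so it suffices to prove $U+V=\Rs[z]$. This goes by induction on degree: $z^{2k}\in V$, and for odd $n$ the polynomial $(z+\tfrac12)^n$ lies in $U$ and equals $z^n$ plus a polynomial of degree $<n$, which is in $U+V$ by the inductive hypothesis; hence $z^n\in U+V$ for every $n$.

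I expect the only real obstacle to be the generating-function computation of $g(t)$: getting the Bernoulli bookkeeping and the $B_1=-\tfrac12$ normalization right, and in particular spotting that premultiplication by $e^{t/2}$ turns $g$ into the manifestly even function $-2/\cosh(t/2)$ — it is this evenness that produces the vanishing on $U$. The remaining ingredients (the trivial verification on even polynomials, the change of variable $z=w-\tfrac12$, and the degree induction for spanning) are routine.
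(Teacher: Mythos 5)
Your proof is correct and takes essentially the same route as the paper's: both reduce the vanishing of $\theta$ on $\{p:\,p(z)+p(-1-z)=0\}$ to the odd powers of $z+\tfrac12$ (the paper writes them as $(1+2z)^m$ with $m$ odd) and both obtain that vanishing from the evenness of the same hyperbolic-secant-type generating function ($-2/\cosh(t/2)$ in your version, $-z/\cosh z$ in the paper's, related by rescaling). The only differences are presentational: you compute the exponential generating function of $\theta(z^n)$ once and multiply by $e^{t/2}$, whereas the paper evaluates each $\theta((1+2z)^m)$ as a Cauchy-product coefficient, and your degree induction showing $U+V=\Rs[z]$ spells out a step the paper only sketches.
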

\begin{proof} The equality $\theta(q)=-2q(0)$ is obvious so we 
only have to prove that $\theta(p)=0$.  Without loss of generality we can assume that $p(z)=(1+2z)^m$ where $m$ is odd. Then 
$$\theta(p)=\sum_{n=0}^m 2^n\binom{m}{n}\theta(z^n)=4\sum_{n=1}^{m+1} \binom{m}{n-1}\frac{2^{n-1}(2^n-1)B_n}{n}=2m!S_m$$
where $S_m=\sum_{n=1}^{m+1} \frac{(4^n-2^n)B_n}{n!(m+1-n)!}$. The number $S_m$ is equal to the coefficient at $z^{m+1}$ in the Taylor 
series of the product 
$$\left(\sum_{n=1}^\infty  \frac{(4^n-2^n)B_n}{n!}z^n\right)\cdot e^z=-\frac{z}{\cosh z}.$$
But this is an odd function so the coefficient at an even power of $z$ is  zero.
Additionally, any polynomial  $P\in \Rs[z]$ can be presented in  the form $P=p+q$ so a functional with the  properties required by the Lemma is unique.
\end{proof}

With this two lemmas the proof of \eqref{t}   becomes obvious. If  $d$ is even then $p(z)+p(-1-z)=0$ for the polynomial 
defined in Lemma 1. Applying $\theta$ to this polynomial we have
$$\theta(p)=-\chi(\MM)+\sum_{n=0}^{d} \beta_n f_n=0.$$

{\bf Remark} If we did not want to compute the coefficients  explicitely then the proof could be a few lines shorter. It is easy  to  show  (e.g. by  induction in the degree)  that a  decomposition  $P=p+q$ from Lemma 2   is  unique. This fact by itself  implies  existence and uniqueness of  $\theta$.  

\bigskip

\end{document}